\newcolumntype{Y}{>{\centering\arraybackslash}X}
\sloppy \pagestyle{plain}
\theoremstyle{plain}
\newtheorem{theorem}{Theorem}
\newtheorem{lemma}{Lemma}
\theoremstyle{definition}
\newtheorem*{definition*}{Definition}
\newtheorem{remark}{Remark}
\newcommand{\QQ}{\mathbb{Q}}
\newcommand{\ZZ}{\mathbb{Z}}
\newcommand{\Aut}{\operatorname{Aut}}
\newcommand{\GL}{\operatorname{GL}}
\newcommand{\Bir}{\operatorname{Bir}}
\def \le {\leqslant}
\date{}
\title{Automorphism groups of Moishezon threefolds}
\author{Yu.~G.~Prokhorov, \quad C.~A.~Shramov}
\email{prokhoro@mi-ras.ru, \quad costya.shramov@gmail.com}
\address{Steklov Mathematical Institute of Russian Academy of Sciences, 8 Gubkina st., Moscow, 119991, Russia
\newline
National Research University Higher School of Economics, Laboratory of Algebraic Geometry, 6 Usacheva str., Moscow, 119048, Russia}
\thanks{This work is supported by the Russian Science Foundation under grant \textnumero 18-11-00121.}
\begin{document}

\begin{abstract}
We study automorphism groups of
Moishezon threefolds and show that such groups are always Jordan.
\end{abstract}

\maketitle

\section{Introduction}

Jordan property plays an important role in the study of automorphism groups
of algebraic varieties and complex manifolds.
Following~\cite[Definition~2.1]{Popov2011}, we say that a group~$\Gamma$ is
\emph{Jordan}
(or has \emph{Jordan property})
if there is a constant $J=J(\Gamma)$
such that every finite subgroup~\mbox{$G\subset\Gamma$}
contains a normal abelian subgroup $A\subset G$ of index at most~$J$.

The groups that enjoy Jordan property include:
general linear groups $\GL_n(\Bbbk)$, where~$\Bbbk$ is a field of zero characteristic
(see for instance~\mbox{\cite[Theorem~36.13]{Curtis-Reiner-1962}});
groups of birational selfmaps of rationally connected
algebraic varieties (see \cite[Theorem~1.8]{ProkhorovShramov-RC} and~\mbox{\cite[Theorem~1.1]{Birkar}});
groups of birational selfmaps of non-uniruled algebraic varieties
(see \cite[Theorem~1.8]{Prokhorov-Shramov-2013});
many diffeomorphism groups of smooth compact real manifolds
(see for instance \cite{Riera-Spheres}, \cite{Riera-OddCohomology}).
One of the most beautiful results concerning Jordan property for
groups of geometric origin
is the following theorem due to Sh.~Meng and D.-Q.~Zhang.

\begin{theorem}[\cite{MengZhang}]
\label{theorem:Meng-Zhang}
Let $X$ be a projective variety over a field of
zero characteristic. Then the group $\Aut(X)$ is Jordan.
\end{theorem}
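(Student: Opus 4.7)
The plan is to combine the action of $\Aut(X)$ on the N\'eron--Severi group of $X$ with the structure theory of connected algebraic groups. Using equivariant resolution of singularities (available in characteristic zero), it is harmless to assume that $X$ is smooth: any finite subgroup $G \subseteq \Aut(X)$ lifts to a $G$-equivariant resolution $\tilde X\to X$, and the Jordan property transfers back to $\Aut(X)$. Once $X$ is smooth and projective, one has the natural representation
\[
\rho\colon \Aut(X) \longrightarrow \GL\bigl(N^1(X)\bigr) \cong \GL_r(\ZZ),
\]
where $r$ is the Picard number of $X$, together with a filtration $\Aut^0(X)\subseteq \Aut_\tau(X)\subseteq \Aut(X)$, in which $\Aut_\tau(X)=\ker\rho$ is the subgroup of automorphisms acting trivially on $N^1(X)$.

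The image of $\rho$ is Jordan by the classical theorem of Jordan on finite subgroups of $\GL_r(\QQ)$ recalled in the introduction. To control the kernel, I would invoke a theorem of Lieberman (together with its algebraic analogue for projective varieties), which asserts that $\Aut_\tau(X)/\Aut^0(X)$ is finite. This reduces the analysis to the connected algebraic group $\Aut^0(X)$. By the Chevalley--Barsotti structure theorem one has a short exact sequence
\[
1\longrightarrow L \longrightarrow \Aut^0(X) \longrightarrow A \longrightarrow 1
\]
with $L$ a connected affine algebraic group and $A$ an abelian variety. Finite subgroups of $L(\Bbbk)\subseteq \GL_n(\Bbbk)$ are Jordan by the classical Jordan theorem, while all finite subgroups of $A$ are abelian; an elementary extension argument then yields the Jordan property for $\Aut^0(X)$.

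To conclude, I would assemble the three bounds along the filtration. Given a finite $G\subseteq \Aut(X)$, the image $\rho(G)\subseteq \GL_r(\ZZ)$ has a normal abelian subgroup of index bounded in terms of $r$ alone; the image of $G\cap \Aut_\tau(X)$ in the finite group $\Aut_\tau(X)/\Aut^0(X)$ has order bounded by a constant depending only on $X$; and $G\cap \Aut^0(X)$ is a finite subgroup of the Jordan group $\Aut^0(X)$. Composing these three controls yields a single Jordan constant $J(\Aut(X))$.

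The hard step will be the finiteness of $\Aut_\tau(X)/\Aut^0(X)$: this is the deep boundedness input on numerically trivial automorphisms and is where the geometry of $X$ really enters the argument. The remaining ingredients -- equivariant resolution, Jordan's classical theorem, and the Chevalley--Barsotti decomposition -- are standard and combine mechanically once this finiteness is in hand.
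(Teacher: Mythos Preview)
The paper does not prove this theorem: it is quoted from \cite{MengZhang} and used as a black box in the proof of Theorem~\ref{theorem:main}. So there is no in-paper argument to compare your proposal against.

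Your sketch is essentially the Meng--Zhang strategy, but there is one imprecision worth flagging. In the assembling step you only claim that $\rho(G)\subseteq\GL_r(\ZZ)$ has a normal abelian subgroup of bounded index, i.e.\ that the image is Jordan. That is not enough to propagate Jordan back through the filtration: an extension of a Jordan group by a Jordan group need not be Jordan, and the lemma you are implicitly relying on is ``Jordan kernel and \emph{bounded finite subgroups} in the quotient implies Jordan extension''. What you actually want here is Minkowski's theorem that finite subgroups of $\GL_r(\ZZ)$ have order bounded in terms of $r$; this gives bounded finite subgroups for $\Aut(X)/\Aut_\tau(X)$, and then (together with the Lieberman--Fujiki finiteness of $\Aut_\tau(X)/\Aut^0(X)$) for $\Aut(X)/\Aut^0(X)$, after which your analysis of $\Aut^0(X)$ via Chevalley's theorem finishes the job. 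A second caveat: Lieberman's theorem is stated for compact K\"ahler manifolds, so over an arbitrary field of characteristic zero you should either cite its algebraic analogue explicitly or, closer to what Meng--Zhang actually do, bypass it by showing directly from the $N^1$-representation and Minkowski that finite subgroups of $\Aut(X)/\Aut^0(X)$ are bounded (this is their Lemma~2.5, cited elsewhere in the present paper). With those two adjustments your outline is sound.
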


The paper \cite{Kim} provides a generalization of Theorem~\ref{theorem:Meng-Zhang}
to the case of compact K\"ahler manifolds.

Note that according to  \cite{Zarhin10} there exist projective surfaces with non-Jordan
groups of birational selfmaps;
furthermore, there exist smooth compact four-dimensional real manifolds with non-Jordan
diffeomorphism groups, see \cite{CsikosPyberSzabo}.
There is a complete classification of two- and three-dimensional
projective varieties with non-Jordan groups of birational selfmaps over
algebraically closed
fields of zero characteristic, see \cite[Theorem~2.32]{Popov2011} and~\mbox{\cite[Theorem~1.8]{Prokhorov-Shramov-3folds}}.
Moreover, it is known that the birational automorphism group of any
non-projective compact complex surface is Jordan, see \cite{ProkhorovShramov-CCS},
but in higher dimensions there are no significant results in this direction yet.
With this in mind, it looks interesting to study automorphism groups of
various classes of compact complex manifolds from the point of view of Jordan property.

Recall that a compact complex space
$X$ is said to be \emph{Moishezon} if the transcendence degree of its field of meromorphic
functions is maximal, that is, equals the dimension of $X$.
Every proper algebraic variety is a Moishezon compact complex space.
Every Moishezon compact complex space is birational to
a projective variety
(see~\mbox{\cite[Theorem~1]{Moishezon-1966}} or~\mbox{\cite[Theorem~$3_N$]{Moishezon-1966a}}).
While every compact complex curve is projective, there exist
two-dimensional Moishezon compact complex spaces that are not projective
(see for instance~\mbox{\cite[Example VII.6.26]{several-complex-variables-7}}).
A smooth Moishezon compact complex space is called a
\emph{Moishezon manifold}.
For every Moishezon compact complex space, one can construct a resolution of singularities
that is a Moishezon manifold (and even a smooth projective variety), see~\mbox{\cite[Theorem~$3_N$]{Moishezon-1966a}}.
Every two-dimensional Moishezon manifold is projective
(see~\mbox{\cite[Corollary~IV.6.5]{BHPV-2004}}).
There are well-known examples of three-dimensional non-projective
Moishezon manifolds, see for instance~\mbox{\cite[\S3]{Moishezon-1966b}}
and~\mbox{\cite[Examples~VII.6.20--VII.6.21]{several-complex-variables-7}}.

The purpose of this paper is to prove the following result that is to some extent
analogous to Theorem~\ref{theorem:Meng-Zhang}.

\begin{theorem}\label{theorem:main}
Let $X$ be a three-dimensional Moishezon compact complex space.
Then the group~\mbox{$\Aut(X)$} is Jordan.
\end{theorem}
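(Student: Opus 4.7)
The plan is to pass from the Moishezon threefold~$X$ to a smooth projective bimeromorphic model and then to invoke the existing Jordan theorems for birational automorphism groups of projective threefolds, treating separately the case in which those theorems do not directly apply. Concretely, by the resolution theorem for Moishezon spaces cited in the introduction, I pick a smooth projective threefold $\tilde X$ together with a bimeromorphic morphism $\tilde X\to X$; every automorphism of~$X$ lifts uniquely to a birational selfmap of~$\tilde X$, and this yields an injective homomorphism $\Aut(X)\hookrightarrow\Bir(\tilde X)$. It therefore suffices to show that the image of this embedding is Jordan, and in particular it is enough to show that $\Bir(\tilde X)$ is Jordan whenever possible.

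Next I would split into cases according to the birational type of $\tilde X$. If $\tilde X$ is not uniruled, then $\Bir(\tilde X)$ is Jordan by \cite[Theorem~1.8]{Prokhorov-Shramov-2013}; if $\tilde X$ is rationally connected, then $\Bir(\tilde X)$ is Jordan by combining \cite[Theorem~1.8]{ProkhorovShramov-RC} with \cite[Theorem~1.1]{Birkar}. The only remaining case is that $\tilde X$ is uniruled but not rationally connected. Here the maximal rationally connected fibration of $\tilde X$ provides a birationally canonical non-uniruled base $B$ with $\dim B\in\{1,2\}$; since the MRC quotient is a birational invariant, I obtain a homomorphism $\rho\colon\Aut(X)\to\Bir(B)$, whose target is Jordan (again by \cite[Theorem~1.8]{Prokhorov-Shramov-2013} when $\dim B=2$, and because $\Aut(B)$ is either finite or an extension of a finite group by an elliptic curve when $\dim B=1$).

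The hard part will be bounding the kernel $K=\ker\rho$. Elements of $K$ act biregularly on~$X$ and preserve the fibres of the MRC fibration on a suitable birational model, so each induces a birational action on the generic fibre~$F$, a rationally connected variety of dimension $3-\dim B$ over the function field~$k(B)$. A purely birational analysis of the birational group of $F$ over $k(B)$ is inadequate, because over a field of positive transcendence degree the birational groups of rational surfaces and even of $\PP^1$ generally fail to be Jordan. To get around this I would exploit that the action of~$K$ is biregular on the global Moishezon space, not merely birational on the generic fibre: run a relative MMP for $\tilde X$ over a suitable model of~$B$, pass to a relatively minimal model on which~$K$ still acts regularly, and then combine the induced action on the relative N\'eron--Severi lattice (which lands in a Jordan linear group) with the analysis of the connected component of the automorphism group of such a relative model (a complex Lie group of bounded dimension whose finite subgroups are Jordan by Jordan's classical theorem). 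The main obstacle is that extensions of Jordan groups need not be Jordan, so assembling all of these pieces into a single uniform Jordan constant for $\Aut(X)$ is the delicate technical core of the argument.
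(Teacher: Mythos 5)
Your reduction to the case where a smooth projective model $\tilde X$ of $X$ is uniruled but not rationally connected, via the Jordan theorems for $\Bir$ of non-uniruled and rationally connected threefolds, matches the paper exactly. But the treatment of that remaining case has a genuine gap, which you yourself flag at the end: you split $\Aut(X)$ into the kernel and image of the map to $\Bir(B)$ induced by the MRC fibration, observe that both pieces ought to be Jordan, and then note that extensions of Jordan groups by Jordan groups need not be Jordan. That observation is fatal to the decomposition you chose, and the further machinery you sketch for the kernel (an equivariant relative MMP with a regular $K$-action, plus the relative N\'eron--Severi representation) does not resolve it; moreover, since the fibres are rationally connected, the relative output is a Mori fibre space on which birational selfmaps are generically not biregular, so even the "pass to a model where $K$ acts regularly" step is not available without substantial extra work.

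The idea you are missing is the paper's central trick, which sidesteps the extension problem entirely by a different dichotomy. One first builds (via the canonical, hence automorphism-equivariant, resolution of Bierstone--Milman) a Moishezon manifold $Y$ with a birational morphism $Y\to X$ (so $\Aut(X)\subset\Aut(Y)$) and a genuine morphism $h\colon Y\to Z$ onto the minimal model $Z$ of the MRC base, which is a smooth projective curve of positive genus or a non-ruled projective surface. The fibrewise subgroup $\Aut(Y)_h$ is Jordan because its finite subgroups act faithfully on a fibre, a compact complex surface, whose automorphism group is Jordan by the authors' earlier surface result. Now the dichotomy is on the image $\Upsilon\subset\Aut(Z)$: if $\Upsilon$ has \emph{bounded} finite subgroups, then $\Aut(Y)$ is a bounded extension of a Jordan group and one is done; if $\Upsilon$ has \emph{unbounded} finite subgroups, one proves that $Y$ is in fact \emph{projective} --- push forward an ample divisor from a projective resolution of $Y$ to get a big divisor $A$ with only finitely many curves $C_i$ satisfying $A\cdot C_i\le 0$, use the large group $\Upsilon$ (translations on an elliptic curve, or unbounded stabilizer-free action on a non-ruled surface) to show no $C_i$ lies in a fibre of $h$, add $h^*D$ for $D$ sufficiently ample on $Z$ to make the divisor strongly nef, and conclude ampleness by the Nakai--Moishezon criterion for Moishezon spaces --- after which the Meng--Zhang theorem for projective varieties finishes the proof. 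Without this projectivity criterion, or some substitute for it, your outline does not close.
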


It would be interesting to find out if there is a generalization of
Theorem~\ref{theorem:main} to the case of Moishezon compact complex spaces of arbitrary dimension.

\smallskip
We are grateful to Andreas H\"oring who spotted a gap in the first version
of our arguments.

\section{Some projectivity criteria}

In this section we collect several assertions on projectivity
of certain Moishezon varieties.

\begin{definition*}
A divisor $A$ is  \textit{strongly numerically effective}, if $A\cdot C>0$ for every curve~$C$.
\end{definition*}

\begin{lemma}\label{lemma:projectivity}
Let $X$ be a Moishezon threefold, and let $A$ be a strongly numerically effective divisor
on $X$. Suppose that for some
$n>0$ the linear system $|nA|$ has no fixed components.
Then $A$ is ample. In particular, the manifold $X$ is projective.
\end{lemma}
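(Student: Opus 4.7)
I would verify the three Nakai--Moishezon positivity conditions for $A$, namely $A^3>0$, $A^2\cdot S>0$ for every irreducible surface $S\subset X$, and $A\cdot C>0$ for every irreducible curve $C\subset X$. The last of these is exactly the hypothesis of strong numerical effectivity. The Nakai--Moishezon criterion is valid on Moishezon manifolds (which are smooth proper algebraic spaces in the sense of Artin), so once the first two are established $A$ is ample and hence $X$ is projective. Before starting, observe that strong numerical effectivity rules out $nA\sim 0$; together with the no-fixed-components assumption this forces $\dim|nA|\geq 1$, and two members of $|nA|$ can be chosen sharing no common irreducible component.

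To prove $A^3>0$, I would take two such members $D_1,D_2\in|nA|$. Since they share no common component, $D_1\cdot D_2$ is a well-defined effective $1$-cycle on $X$. The key step is showing that this cycle is nonzero: if $D_1\cap D_2=\emptyset$, then any irreducible curve $C\subset D_1$ would satisfy $nA\cdot C=D_2\cdot C=0$, contradicting $A\cdot C>0$. Writing $D_1\cdot D_2=\sum m_i C_i$ with $m_i>0$, strong numerical effectivity then gives
\[
n^2 A^3 \;=\; A\cdot(D_1\cdot D_2) \;=\; \sum m_i\,(A\cdot C_i) \;>\; 0.
\]

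For the intermediate condition, let $S\subset X$ be an irreducible surface. The no-fixed-components hypothesis produces $D\in|nA|$ with $S\not\subset D$, so that $D\cdot S$ is a well-defined effective $1$-cycle on $X$. The same emptiness argument, applied now to any irreducible curve $C\subset S$, rules out $D\cap S=\emptyset$, so $D\cdot S=\sum m_i C_i$ with $m_i>0$ and
\[
n\,A^2\cdot S \;=\; A\cdot(D\cdot S) \;=\; \sum m_i\,(A\cdot C_i) \;>\; 0.
\]

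The conceptual obstacle is the non-projectivity of $X$: on a general Moishezon threefold two linearly equivalent effective divisors can be disjoint, as can an effective divisor and a fixed surface, which would make the relevant intersection cycles vanish and break the numerical argument. Ruling this out is precisely what the strict positivity $A\cdot C>0$ on every curve is designed for, as it lets one convert an assumed empty intersection into a curve with zero $A$-degree. A secondary (but standard) input is the Nakai--Moishezon ampleness criterion on a possibly non-projective Moishezon manifold, which should be invoked with an appropriate citation.
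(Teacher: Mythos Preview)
Your proof is correct and follows the same Nakai--Moishezon route as the paper, using the absence of fixed components in $|nA|$ to produce effective intersection cycles and then invoking $A\cdot C>0$; your argument for $A^3>0$ is in fact more explicit than the paper's bare assertion that $A$ is nef and big. The only small addition worth making is the paper's observation that every irreducible surface $S\subset X$ is itself Moishezon and therefore contains at least one curve, which is what guarantees your emptiness argument for $D\cap S$ (and for $D_1\cap D_2$, applied to a component of $D_1$) actually has a curve to work with.
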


\begin{proof}
Follows from Nakai--Moishezon ampleness criterion for
Moishezon compact complex spaces, see~\mbox{\cite[Theorem~6]{Moishezon-1966}}.
Indeed, by assumption we have~\mbox{$A\cdot C>0$} for every curve~$C$.
Furthermore, since the divisor $A$ is numerically effective and big,
one has~\mbox{$A^3>0$}.

Let $S\subset X$ be an irreducible surface (that is, a two-dimensional compact complex
subspace). Then $S$ is a Moishezon compact complex space by~\mbox{\cite[Theorem~3]{Moishezon-1966}}.
In particular, $S$ contains curves.
The restriction $A_S=A|_S$ is an effective divisor on~$S$,
and one has~\mbox{$A_S\neq 0$}, because $A_S$ has positive intersections with curves
on $S$.
Therefore, we see that~\mbox{$A^2\cdot S>0$}. Hence $A$ is ample by Nakai--Moishezon criterion.
\end{proof}

\begin{lemma}\label{lemma:surfaces}
Let $g\colon \hat{F}\to F$ be a surjective morphism of smooth
compact complex surfaces.
Let $\hat{A}$ be an ample divisor on $\hat{F}$.
Then the divisor $A=g_*\hat{A}$ is ample.
\end{lemma}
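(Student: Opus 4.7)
My plan is to verify the Nakai--Moishezon ampleness criterion for $A:=g_*\hat A$ on $F$. Two preliminary observations are in order: since $\hat F$ carries the ample divisor $\hat A$ it is projective, and so $F$, being a surjective image of a Moishezon space, is itself Moishezon and hence (as a smooth Moishezon surface) projective by the reference already cited in the introduction; moreover $g$ is a surjective morphism between surfaces of the same dimension, so it is proper and generically finite, which makes the projection formula available.

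First, for any irreducible curve $C\subset F$ the projection formula gives
\[
A\cdot C=g_*\hat A\cdot C=\hat A\cdot g^*C.
\]
Since $g$ is surjective and $F$ is smooth, $g^*C$ is a non-zero effective Cartier divisor on $\hat F$, and ampleness of $\hat A$ then forces $\hat A\cdot g^*C>0$.

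It remains to show $A^2>0$. Since ampleness is a numerical condition and $g_*$ is linear on divisors, after replacing $\hat A$ by a large multiple I may assume $\hat A$ is very ample; Bertini then provides a smooth irreducible curve $\hat D\in|\hat A|$. The crucial step is that $\hat D$ is not contracted by $g$: were it so, one would have $g_*\hat D=0$, and then the projection formula would give $\hat D\cdot g^*C=g_*\hat D\cdot C=0$ for every curve $C\subset F$, contradicting ampleness of $\hat D$ applied to the non-zero effective divisor $g^*C$. Consequently $g_*\hat D$, and with it $A$, is a non-zero effective divisor; writing $A=\sum a_iC_i$ with $a_i>0$ and $C_i$ prime, the previous paragraph yields $A\cdot C_i>0$ for each $i$, so $A^2=\sum a_i(A\cdot C_i)>0$. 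The Nakai--Moishezon criterion, in the form used in the proof of Lemma~\ref{lemma:projectivity}, then shows that $A$ is ample.

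The only substantive point in this outline is the non-contraction of a very ample representative $\hat D$, which is where surjectivity of $g$ is used in an essential way; once that is in hand, the rest follows formally from the projection formula and from positivity of $\hat A$ against arbitrary non-zero effective divisors.
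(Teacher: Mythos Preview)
Your proof is correct and takes essentially the same route as the paper: verify the Nakai--Moishezon criterion by using the projection formula to obtain $A\cdot C>0$ for every curve $C$ and $A^2>0$. The paper's argument is a single terse sentence (it writes ``adjunction formula'' where ``projection formula'' is clearly meant) and leaves the inequality $A^2>0$ implicit; your step of passing to a very ample multiple, choosing an irreducible member $\hat D$, and showing $g_*\hat D\neq 0$ is exactly the detail one needs to make that inequality rigorous.
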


\begin{proof}
By adjunction formula, we have $A\cdot C>0$ for every curve $C$ on $F$, and also~\mbox{$A^2>0$}.
Hence $A$ is ample by Nakai--Moishezon criterion.
\end{proof}

\begin{remark}
The assertion of Lemma~\textup{\ref{lemma:surfaces}} fails in the case when the surface
$F$ is singular. However, it still holds if $F$ is a
two-dimensional normal compact complex space
with $\QQ$-factorial singularities.
\end{remark}

For a contraction $h\colon Y \to Z$ by $\Aut(Y; h)$ we denote the subgroup of $\Aut(Y)$
that consists of all automorphisms such that $h$ is equivariant with respect to their action.
By $\Aut(Y; h)'$  we denote the maximal
subgroup of $\Aut(Y; h)$ that acts trivially on~\mbox{$H^*(Y,\ZZ)$}.
Note that~\mbox{$\Aut(Y; h)'$} is a normal subgroup of $\Aut(Y; h)$,
and the quotient~\mbox{$\Aut(Y; h)/\Aut(Y; h)'$}
has bounded finite subgroups by the classical theorem of Minkowski. Therefore, the group $\Aut(Y; h)$ has bounded
finite subgroups if and only if this holds for $\Aut(Y; h)'$.

\begin{lemma}\label{lemma:3-1}
Let $Y$ be a Moishezon threefold.
Let~\mbox{$h\colon Y \to Z$} be a contraction to a non-rational curve.
Furthermore, assume that the image of the homomorphism~\mbox{$\Aut(Y; h)'\to \Aut(Z)$} is infinite.
Then the threefold $Y$ is projective.
\end{lemma}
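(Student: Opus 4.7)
The plan is to construct a divisor $A$ on $Y$ whose restriction to every fiber of $h$ is ample; the fiberwise ampleness criterion for the proper morphism $h$ will then show $A$ is $h$-ample, so that $h$ is a projective morphism, and since the target is projective, $Y$ is projective.

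Since $Z$ is a smooth curve (being the target of a contraction) of positive genus whose automorphism group has infinite image, $Z$ must be an elliptic curve~$E$. Writing $\Aut(E)\cong E\rtimes F$ with $F$ finite, the infinite image $\bar G$ of $\Aut(Y;h)'$ in $\Aut(E)$ contains an infinite subgroup of translations of $E$, whose Zariski closure is all of $E$. Hence every $\bar G$-orbit in $E$ is Zariski-dense.

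Assume that $Y$ is smooth. Since $Y$ is Moishezon, one may choose a projective modification $\pi\colon \tilde Y\to Y$ with $\tilde Y$ smooth projective; pick an ample divisor $\tilde A$ on $\tilde Y$ and set $A=\pi_*\tilde A$. I claim that $A|_{F_z}$ is ample for every $z\in E$, where $F_z=h^{-1}(z)$. By generic smoothness of $h$ and $h\circ\pi$, together with the finiteness of the vertical components of the exceptional locus of $\pi$, there is a Zariski-open dense $U\subset E$ such that, for $z\in U$, both $F_z$ and $\tilde F_z=(h\circ\pi)^{-1}(z)$ are smooth surfaces and $\pi|_{\tilde F_z}\colon \tilde F_z\to F_z$ is a proper birational surjection of smooth surfaces with $(\pi|_{\tilde F_z})_*(\tilde A|_{\tilde F_z}) = A|_{F_z}$; Lemma~\ref{lemma:surfaces} then yields ampleness of $A|_{F_z}$. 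For an arbitrary $z\in E$, use density of $\bar G\cdot z$ in $E$ to pick $g\in \Aut(Y;h)'$ with $\bar g(z)\in U$. Then $g\colon F_z\to F_{\bar g(z)}$ is an isomorphism (so $F_z$ is smooth projective), and $g^*(A|_{F_{\bar g(z)}})$ is ample on $F_z$. Since $g$ acts trivially on $H^2(Y,\ZZ)$ by definition of $\Aut(Y;h)'$, we have $g^*A\equiv A$ on $Y$; restriction preserves numerical equivalence, so $A|_{F_z}\equiv g^*(A|_{F_{\bar g(z)}})$ numerically on $F_z$, and the Nakai--Moishezon criterion on the projective surface $F_z$ now forces ampleness of $A|_{F_z}$.

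The main technical difficulty is verifying the equality $A|_{F_z}=(\pi|_{\tilde F_z})_*(\tilde A|_{\tilde F_z})$ and the smoothness of $\tilde F_z$ and $F_z$ for generic $z$, which requires a careful description of how the exceptional locus of $\pi$ meets a general fiber of $h$ (and may require replacing $\tilde A$ by a general element of a suitable linear system). Once this is in place, the hypothesis that $\bar G$ is infinite propagates ampleness cleanly from $U$ to every fiber via numerical equivalence, and the fiberwise ampleness criterion delivers projectivity of $h$, hence of $Y$.
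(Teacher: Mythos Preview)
Your approach works (modulo the technical verification you flag) but is genuinely different from the paper's, and proves slightly less: you assume $Y$ smooth, whereas the lemma as stated allows singular~$Y$, and the paper's argument does not need smoothness.

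The paper also begins with $A=\pi_*\tilde A$ for a projective modification $\pi\colon\tilde Y\to Y$, but instead of analyzing fibers it observes that only \emph{finitely many} curves $C_i\subset Y$ satisfy $A\cdot C_i\le 0$ (necessarily lying in the image of the $\pi$-exceptional locus). Because $\Aut(Y;h)'$ acts trivially on $H^*(Y,\ZZ)$, one has $A\cdot\delta(C_i)=A\cdot C_i\le 0$ for every $\delta$, so this finite set $\{C_i\}$ is $\Aut(Y;h)'$-invariant. If some $C_i$ lay in a fiber, the images $h(C_i)$ would form a nonempty finite subset of the elliptic curve $Z$ preserved by the infinite translation group~$\bar G$, a contradiction. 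Hence every $C_i$ dominates $Z$, and for $D$ sufficiently ample on $Z$ the divisor $A'=A+h^*D$ is big and strongly numerically effective; Lemma~\ref{lemma:projectivity} then gives ampleness of $A'$ directly on~$Y$.

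Compared with your route, the paper's argument is shorter and more elementary: it avoids the fiberwise-ampleness criterion for proper morphisms, the generic-fiber base-change computation you identify as the main difficulty, and any smoothness hypothesis on $Y$; it invokes Lemma~\ref{lemma:projectivity} rather than Lemma~\ref{lemma:surfaces}. Your argument, by contrast, gives a pleasant geometric picture---the infinite group action forces all fibers to be isomorphic to a general one, hence projective with $A$ restricting amply---and makes essential use of Lemma~\ref{lemma:surfaces}. Both proofs exploit triviality of the action on cohomology in the same way, namely to transport numerical positivity along the group action; the difference is whether one transports positivity against curves (paper) or ampleness on whole fibers (you).
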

\begin{proof}
Let $g\colon \hat Y\to Y$ be a birational morphism
such that $\hat Y$ is projective; such a morphism
always exists, see~\mbox{\cite[Theorem~$3_N$]{Moishezon-1966a}}.
Let $\hat A$ be an ample divisor on $\hat Y$.
Set $A=g_*\hat A$. Then the divisor $A$ is big.
By projection formula,
there is at most a \textit{finite} number of
curves~\mbox{$C_i\subset Y$} such that~\mbox{$A\cdot C_i\le 0$}
(these curves must be contained in the image of the $g$-exceptional divisor).

Note that for any $\delta\in \Aut(Y; h)'$ we have
$$
A\cdot \delta{(C_i)}=A\cdot C_i\le 0.
$$
Hence the set $\{C_i\}$ of all such curves is invariant under $\Aut(Y; h)'$.
By our assumptions $Z$ is an elliptic curve and the image of $\Aut(Y; h)'\to \Aut(Z)$ consists of translations. This implies that no curve $C_i$ can be contained in a fiber of~$h$, i.e. all the curves $C_i$ dominate $Z$.
Indeed, otherwise the group $\Aut(Y; h)'$ would preserve a non-empty finite subset of $Z$ that
consists of the images of the curves $C_i$ contained in the fibers of $h$, and thus~\mbox{$\Aut(Y; h)'$}
would be finite.

Therefore, for a sufficiently ample divisor $D$ on~$Z$ one has~\mbox{$(A+h^*D)\cdot C_i>0$} for all $C_i$.
Thus $A'=A+h^*D$ is a big and strongly numerically effective divisor.
Hence it is ample by Lemma~\ref{lemma:projectivity},
and the threefold~$Y$ is projective.
\end{proof}

\begin{lemma}\label{lemma:3-2}
Let $Y$ be a Moishezon threefold, and let~\mbox{$h\colon Y \to Z$}
be a contraction to a  non-ruled surface.
Assume that the image $\Upsilon$ of the homomorphism~\mbox{$\Aut(Y; h)'\to \Aut(Z)$} has unbounded finite subgroups.
Then
the threefold $Y$ is projective.
\end{lemma}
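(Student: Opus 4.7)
My plan is to adapt the argument of Lemma~\ref{lemma:3-1}. Fix a birational morphism $g\colon\hat Y\to Y$ with $\hat Y$ projective, let $\hat A$ be an ample divisor on $\hat Y$, and set $A=g_*\hat A$. Then $A$ is big on $Y$, and by the projection formula the set $\{C_1,\ldots,C_r\}$ of curves $C\subset Y$ with $A\cdot C\le 0$ is finite and contained in the image of the $g$-exceptional locus. Since $\Aut(Y;h)'$ acts trivially on $H^*(Y,\ZZ)$, it preserves this collection of curves.

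The crux is to rule out the possibility that any $C_i$ is contracted by $h$. Suppose for contradiction some $C_i$ lies in a fibre of $h$, and let $\Sigma\subset Z$ be the non-empty finite set of $h$-images of the contracted $C_i$. Then $\Sigma$ is $\Upsilon$-invariant, so the subgroup $\Upsilon_0\subset\Upsilon$ that fixes $\Sigma$ pointwise has index at most $|\Sigma|!$ and still has unbounded finite subgroups. In particular $\Upsilon_0\subset\Aut(Z,z_0)$ for some $z_0\in\Sigma$.

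I would then prove that $\Aut(Z,z_0)$ has bounded finite subgroups, contradicting the previous paragraph. The surface $Z$ is Moishezon, hence projective, and non-ruled by assumption. By a theorem of Fujiki, the identity component $\Aut^0(Z)$ is a compact complex Lie group, and since $Z$ is non-uniruled, $\Aut^0(Z)$ is a (possibly trivial) compact complex torus~$T$. The identity component $T_{z_0}^0$ of the stabilizer $T\cap\Aut(Z,z_0)$ acts on the Zariski tangent space at $z_0$ via a holomorphic homomorphism to some $\GL_N(\CC)$; since $T_{z_0}^0$ is compact and $\GL_N(\CC)$ is Stein, this homomorphism is constant, so the derivative action is trivial, and Bochner--Cartan linearization then forces $T_{z_0}^0$ to act trivially on $Z$. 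Hence $T_{z_0}^0=\{1\}$ and $T\cap\Aut(Z,z_0)$ is finite. On the other hand, $\Aut(Z)/\Aut^0(Z)$ has bounded finite subgroups: the kernel of the natural action on $H^2(Z,\ZZ)$ contains $\Aut^0(Z)$ as a subgroup of finite index by Lieberman's theorem, while its image in $\GL(H^2(Z,\ZZ))$ has bounded finite subgroups by Minkowski's theorem. Together these two statements bound the finite subgroups of $\Aut(Z,z_0)$, giving the required contradiction.

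Hence every $C_i$ maps to a curve in $Z$. Picking an ample divisor $D$ on the projective surface $Z$ with $h^*D\cdot C_i>|A\cdot C_i|$ for every $i$, the divisor $A'=A+h^*D$ is big and strongly numerically effective on $Y$, so ample by Lemma~\ref{lemma:projectivity}, and $Y$ is projective. The main obstacle I foresee is the bounded-finite-subgroups claim for $\Aut(Z,z_0)$; the Stein/linearization argument for the torus stabilizer, combined with Lieberman's and Minkowski's theorems, is what substitutes here for the elementary translation argument used in Lemma~\ref{lemma:3-1}.
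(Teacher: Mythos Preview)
Your proposal is correct and follows essentially the same route as the paper's proof. The only difference is in how the stabilizer step is justified: the paper cites \cite[Lemma~2.5]{MengZhang} for the boundedness of finite subgroups of $\Aut(Z)/\Aut_0(Z)$ and simply notes that $\Aut_0(Z)$, being an abelian variety (since $Z$ is non-ruled), has trivial point-stabilizers, whereas you unpack the former via Lieberman's and Minkowski's theorems and establish the finiteness of the torus stabilizer through the compact/Stein and Bochner--Cartan linearization argument.
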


\begin{proof}
Similarly to the proof of Lemma~\ref{lemma:3-1}, consider a birational morphism~\mbox{$g\colon \hat Y\to Y$}
such that $\hat Y$ is projective. Let $\hat A$ be an ample divisor on~$\hat Y$, and set~\mbox{$A=g_*\hat A$}.
There is at most a finite number of curves $C_i\subset Y$ such that~\mbox{$A\cdot C_i\le 0$}.
As in the proof of Lemma~\ref{lemma:3-1} the set $\{C_i\}$ of all such curves is invariant under $\Aut(Y; h)'$. Suppose that some curve $C_i$ is contained in a fiber of~$h$.
Let $z=h(C_i)$, let $\Aut(Z,z)\subset \Aut(Z)$ be the stabilizer of $z$, and let $\Upsilon_z=\Upsilon\cap  \Aut(Z,z)$. The index $[\Upsilon: \Upsilon_z]$ is finite.
Therefore, $\Upsilon_z$ has unbounded finite subgroups. By \cite[Lemma~2.5]{MengZhang}
the finite subgroups of~\mbox{$\Upsilon_z/(\Upsilon_z\cap \Aut_0(Z))$}
are bounded, where $\Aut_0(Z)$ is the connected component of identity in $\Aut(Z)$. On the other hand,
since $Z$ is not ruled,  $\Aut_0(Z)$
is either trivial or is an abelian variety and so the stabilizer
$\Aut_0(Z, z)$ of $z$ in $\Aut_0(Z)$ must be trivial.
The contradiction shows that  no curve $C_i$ can be contained in a fiber of $h$.
Thus for a sufficiently ample divisor $D$ on~$Z$ the divisor~\mbox{$A'=A+h^*D$} is
big and strongly numerically effective.
Hence $A'$ is ample by Lemma~\ref{lemma:projectivity}, and the threefold~$Y$ is projective.
\end{proof}

Note that the construction of~\mbox{\cite[Example~VII.6.20]{several-complex-variables-7}}
allows one to obtain an example of a non-projective
Moishezon threefold such that the base of its maximal rationally connected fibration has arbitrary dimension
(from~$0$ to~$3$).

\section{Proof of Theorem~\ref{theorem:main}}

In this section we prove Theorem~\ref{theorem:main}.

Suppose that the group $\Aut(X)$ is not Jordan.
Since~\mbox{$\Aut(X)$} is a subgroup of the group~\mbox{$\Bir(X)$} of birational selfmaps of $X$,
we conclude that $\Bir(X)$ is not Jordan either.
Note that $\Bir(X)$ is isomorphic to the group of birational selfmaps of
some projective variety~$\hat{X}$ birational to the compact complex space~$X$.
According to~\mbox{\cite[Theorem~1.8(ii)]{Prokhorov-Shramov-2013}}, the variety~$\hat{X}$ is uniruled,
and by~\mbox{\cite[Theorem~1.8]{ProkhorovShramov-RC}} and~\mbox{\cite[Theorem~1.1]{Birkar}} it is not rationally
connected. Since~$X$ is birational to~$\hat{X}$, we see that~$X$ is also uniruled but not
rationally connected. There exists the maximal rationally connected fibration~\mbox{$f\colon X \dashrightarrow V$},
and one has~\mbox{$0<\dim V<\dim X$}, see~\mbox{\cite[Theorem~5.5.4]{Kollar-1996-RC}}.
The compact complex space $V$ is Moishezon by~\mbox{\cite[Theorem~2]{Moishezon-1966}}.
Note that the maximal rationally connected fibration is defined
only as a rational
map. Thus, resolving the singularities of $V$, we may assume that $V$ is smooth.

One of our main tools is the following result that is implied by the existence of
a canonical resolution of singularities
(see~\cite[\S13]{Bierstone-Milman-1997}).

\begin{theorem}\label{theorem:BierstoneMilman}
Let $M$ be an irreducible compact
complex manifold, and let $W\subset M$ be its compact complex subspace.
Then there exists a sequence of blow ups~\mbox{$\pi\colon \tilde{M}\to M$} with smooth centers
such that the union of the proper transform
$\pi^{-1}W$ with the exceptional locus
$E$ of $\pi$ is a simple normal crossing divisor. Moreover, the morphism
$\pi$ is canonical in the following sense:
every automorphism $M \to M$ preserving $W$ can be extended to an
automorphism~\mbox{$\tilde{M}\to\tilde{M}$} that commutes with~$\pi$.
\end{theorem}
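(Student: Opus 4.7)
The plan is to deduce the statement directly from the canonical embedded resolution of singularities for complex analytic spaces constructed by Bierstone and Milman. I would not attempt to reprove resolution of singularities; instead I would package their output in the form demanded here.

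First, I would apply their algorithm to the pair $(M, W)$, viewing $W$ as a closed complex analytic subspace of the smooth manifold $M$. The output is a finite sequence of blow ups
$$
M = M_0 \leftarrow M_1 \leftarrow \cdots \leftarrow M_k = \tilde{M},
$$
each with a smooth nowhere dense center contained in the current total transform of $W$, such that in $\tilde{M}$ the total transform of $W$ is a simple normal crossing divisor. Taking $\pi$ to be the composition, the union of the strict transform $\pi^{-1}W$ with the exceptional locus $E$ is then SNC, which is precisely the geometric conclusion of the theorem.

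Second, for the canonicity clause I would invoke the functoriality built into the Bierstone--Milman construction. The center of each blow up is defined in terms of local invariants (an elaboration of Hironaka's characteristic function) that depend only on the pair $(M_i, W_i)$ up to local analytic isomorphism. Hence, for any analytic automorphism $\phi \colon M \to M$ with $\phi(W) = W$, the first center is preserved by $\phi$, so $\phi$ lifts uniquely to an automorphism $\phi_1 \colon M_1 \to M_1$ commuting with $M_1 \to M$. Inductively, $\phi_i$ preserves the $(i+1)$-th center, hence lifts to $\phi_{i+1}$. Setting $\tilde{\phi} = \phi_k$ gives the required extension commuting with $\pi$.

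The main obstacle, such as it is, is bibliographic rather than mathematical: one has to check that the algorithm of \cite{Bierstone-Milman-1997}, which is formulated in the analytic setting, indeed applies to an arbitrary irreducible compact complex manifold $M$ and an arbitrary closed complex subspace $W$ (not merely to algebraic varieties), and that the functoriality statement covers global automorphisms of $M$ preserving $W$, not only local analytic isomorphisms. Both points are covered in the cited paper, so once they are quoted the theorem follows immediately.
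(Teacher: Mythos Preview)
Your proposal is correct and matches the paper's approach exactly: the paper does not give an independent proof but simply records that the statement is implied by the canonical embedded resolution of Bierstone--Milman, citing \cite[\S13]{Bierstone-Milman-1997}. Your write-up just spells out the two points (SNC output and functoriality under automorphisms preserving $W$) that one must extract from that reference.
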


Theorem~\ref{theorem:BierstoneMilman} allows us to prove the following result.

\begin{lemma}\label{lemma:diagram}
Let $X$ be a Moishezon compact complex space. Let~\mbox{$f\colon X \dashrightarrow V$} be the maximal
rationally connected fibration, where we choose $V$ to be smooth.
Suppose that~\mbox{$\dim V\le 2$}. Let $Z$ be the minimal model of~$V$.
Then there is an $\Aut(X)$-equivariant commutative diagram
\begin{equation}\label{diagram}
\vcenter{
\xymatrix@R=17pt{
X\ar@{-->}[dr] & Y\ar [d]^{h}\ar[l]
\\
& Z
}}
\end{equation}
Here $Y$ is a Moishezon manifold, $Z$ is smooth and projective,
$Y \to X$ is a birational morphism, and~\mbox{$h\colon Y \to Z$} is the maximal rationally connected
fibration for~$Y$.
\end{lemma}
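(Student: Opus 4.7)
The plan is to obtain $Y$ as an $\Aut(X)$-equivariant resolution of the graph of the composed rational map $X\dashrightarrow V\to Z$, where the target $Z$ carries a \emph{biregular} action of $\Aut(X)$. Since $\dim V\le 2$, the smooth Moishezon manifold $V$ is projective (every compact complex curve is projective, and every smooth two-dimensional Moishezon manifold is projective, as recalled in the introduction). Being the base of a maximal rationally connected fibration, $V$ is non-uniruled, so its minimal model $Z$ is well-defined, smooth, and projective; in dimension $1$ one has $Z=V$, and in dimension $2$ the minimal model is unique. Each $\alpha\in\Aut(X)$ induces, by the universal property of the MRC fibration, a birational self-map of $V$ intertwining $f\comp\alpha$ with $f$, and by the uniqueness of the minimal model of a non-uniruled surface this descends to a biregular $\sigma(\alpha)\in\Aut(Z)$. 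Thus one obtains a homomorphism $\sigma\colon\Aut(X)\to\Aut(Z)$ making the composition $g\colon X\dashrightarrow Z$ of $f$ with $V\to Z$ into a $\sigma$-equivariant rational map.

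Next I would construct $Y$ by equivariantly resolving the graph of $g$. Using the canonicity of Bierstone--Milman desingularization (of which Theorem~\ref{theorem:BierstoneMilman} is one face, see also~\cite{Bierstone-Milman-1997}), replace $X$ by a canonical, hence $\Aut(X)$-equivariant, Moishezon-manifold resolution $X'\to X$. The map $g$ lifts to a rational map $X'\dashrightarrow Z$, and the closure $\Gamma\subset X'\times Z$ of its graph is a closed subspace of the smooth Moishezon manifold $X'\times Z$, invariant under the diagonal action of $\Aut(X)$ through $(\mathrm{id},\sigma)$. Since $\Gamma$ is itself Moishezon (as a closed subspace of a Moishezon manifold), a further canonical desingularization produces an $\Aut(X)$-equivariant birational morphism $Y\to\Gamma$ from a Moishezon manifold $Y$; composing with the two projections of $X'\times Z$ yields the birational morphism $Y\to X'\to X$ and the morphism $h\colon Y\to Z$, both equivariant by canonicity.

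It remains to verify that $h$ is the MRC fibration of $Y$: its general fiber is birational to a general fiber of $f$ and is therefore rationally connected, while its base $Z$ is non-uniruled, so by the uniqueness of the MRC fibration $h$ is the MRC of $Y$. The main technical obstacle is ensuring that both steps---the resolution of $X$ and the resolution of $\Gamma$---can be performed so that the $\Aut(X)$-action propagates through each; this is exactly the canonicity/functoriality content of the Bierstone--Milman construction, and is the only delicate ingredient in the argument.
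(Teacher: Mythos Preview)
Your argument is correct and follows essentially the same route as the paper: pass from the birational $\Aut(X)$-action on $V$ to a biregular action on the minimal model $Z$, take the closure of the graph of $X\dashrightarrow Z$, and apply canonical (hence equivariant) resolution of singularities to obtain $Y$. The only cosmetic difference is that you insert a preliminary equivariant resolution $X'\to X$ before forming the graph so that the ambient product is smooth, whereas the paper resolves the graph in $X\times Z$ directly; your explicit check that $h$ is the MRC of $Y$ is a detail the paper leaves implicit.
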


\begin{proof}
Since the manifold $V$ is Moishezon and has dimension at most $2$,
it is projective. Hence its minimal model $Z$ is projective (and smooth) as well.

Recall that the group $\Aut(X)$ acts on $V$ by birational maps (possibly non-faithfully).
Since $Z$ is a minimal model of $V$, the group~\mbox{$\Aut(V)$} acts on $Z$ biregularly.
The composition~\mbox{$\sigma\colon X\dasharrow V$} of $f$ with the contraction $V\to Z$
is an $\Aut(X)$-equivariant map. Consider the closure
$\bar{\Gamma}_\sigma$ of the graph of this map in $X\times Z$.
Since the action of $\Aut(X)$ on~\mbox{$X\times Z$} is biregular, the action of $\Aut(X)$ on
$\bar{\Gamma}_\sigma$ is biregular as well. Finally, let~$Y$ be
the canonical resolution of singularities of $\bar{\Gamma}_\sigma$
provided by Theorem~\ref{theorem:BierstoneMilman}. The action of $\Aut(X)$ on $Y$
is again biregular, which gives us the commutative $\Aut(X)$-equivariant diagram~\eqref{diagram}.
\end{proof}

Apply Lemma~\ref{lemma:diagram} to our Moishezon compact complex space
$X$.
We have an embedding~\mbox{$\Aut(X)\subset\Aut(Y)$}.

Since the map $h\colon Y\to Z$ is the maximal rationally connected fibration for $Y$,
we see that the group $\Aut(Y)$ acts (possibly non-faithfully) on $Z$, and the map
$h$ is $\Aut(Y)$-equivariant.
Let $\Aut(Y)_h$ be
the subgroup of $\Aut(Y)$
that consists of all automorphisms whose action is fiberwise with respect to~$h$,
and let $\Upsilon\cong\Aut(Y)/\Aut(Y)_h$ be the image of the group $\Aut(Y)$ in $\Aut(Z)$. All finite subgroups
of $\Aut(Y)_h$ act faithfully on a non-multiple fiber
of $h$, and thus $\Aut(Y)_h$ is Jordan by \cite[Theorem~1.5]{ProkhorovShramov-CCS}.
This implies that if $\Upsilon$ has bounded finite subgroups, then
the group $\Aut(Y)$ is
Jordan as well. Therefore, we will assume that the group $\Upsilon$ has unbounded finite subgroups.

Suppose that the dimension of $Z$ equals $1$.
Then $Z$ is a non-rational curve.
In this case~$Y$ is projective by Lemma~\ref{lemma:3-1}.

Suppose that the dimension of $Z$ equals $2$.
Then $Z$ is a non-ruled surface.
In this case~$Y$ is projective by Lemma~\ref{lemma:3-2}.

Therefore, we see that under our assumptions
the group $\Aut(X)$ is contained in the automorphism group
of the projective variety~$Y$. Now Theorem~\ref{theorem:main} follows from Theorem~\ref{theorem:Meng-Zhang}.


\end{document}